\numberwithin{equation}{section}
\newtheorem{theorem}{Theorem}%[chapter]
 \newtheorem{remark}{Remark}
\newtheorem{proposition}[theorem]{Proposition}
\newtheorem{example}{Example}
\newtheorem{corollary}[theorem]{Corollary}
\newcommand{\D}{
\mathcal{D}
%T
}
\newcommand{\kk}{\kappa}
\newcommand{\E}{E_\kk}
\newcommand{\dt}{\mathscr{F}_\kappa}
\title{
Hobson's formula for Dunkl operators and its applications
}
\author[N.~Shimeno]{Nobukazu Shimeno}
\address{School of Science and Technology, Kwansei Gakuin University, 
2-1 Gakuen, Sanda, Hyogo 669-1337, Japan}
\email{shimeno@kwansei.ac.jp}
\subjclass[2000]{Primary~33C52, Secondary~33C45, 42B10}
\keywords{Hobson's formula; Dunkl operators; Bochner identity; Pizzetti's formula.}
\date{\today}
\begin{document}
\setlength{\abovedisplayskip}{4pt} % 上部のマージン
\setlength{\belowdisplayskip}{4pt} % 下部のマージン
\maketitle
\thispagestyle{empty}

\begin{abstract}
We generalize classical Hobson's formula concerning partial derivatives of 
radial functions on a Euclidean space to a formula in the Dunkl analysis. As applications we 
give new simple 
proofs of known results involving Maxwell's representation of harmonic 
polynomials, Bochner-Hecke identity, Pizzetti formula for spherical mean, 
and Rodrigues formula for Hermite polynomials. 
\end{abstract}

\section{Introduction}

There exists a nice explicit formula due to Hobson \cite{Hobson1, Hobson2} 
to calculate the action of a constant coefficient linear differential 
operator on radial functions on a Euclidean space, though it is not very well known. 
%Hobson's formula is useful in studying the Fourier analysis on $\mathbb{R}^d$. 
There are several applications of Hobson's formula such as 
Clebsch projection and Maxwell's representation of harmonic functions, 
Bochner-Hecke identity,  and formulae for  Hermite polynomials 
(\cite{Hobson1, Hobson2, S1,S2}).  
%Recently, 
Nomura~\cite{N2} gives a new simple proof of Hobson's formula by using the 
Euler operator. 

In this paper we prove an analogue of Hobson's formula for the Dunkl operators (Theorem~\ref{thm:hobson2}) 
by using a method similar to that of \cite{N2}. 
The Dunkl operators are differential-difference operator on a Euclidean space $\mathbb{R}^d$ 
associated with a finite reflection group $G$, 
which are deformations of directional derivatives. 
For $G=\mathbb{Z}_2^d$, our formula is previously given by Volkmer \cite{volkmer}. 
Moreover, our formula contains the original Hobson's formula as a special case. 
The formula for general $G$ seems to be a new result. 

We give some applications of Hobson's formula for the Dunkl operators analogous 
to those for the Euclidean case mentioned above.  
Though consequences of Hobson's formula presented in this paper are all known 
results, they provide simpler alternative proofs  or  other viewpoints 
in the Dunkl analysis. 
We believe that Hobson's formula is worth more attention in Dunkl analysis 
as well as in 
Euclidean Fourier analysis.
% and hope that this paper will be helpful for publicity 
%of Hobson's formula. 

%Hobsonの公式 \cite{Hobson1,Hobson2}．その証明 \cite{S1}, \cite{N2}

\section{Preliminary results on Dunkl analysis}

In this section we give notation, definitions, and some of known results on Dunkl analysis. 
We refer \cite{DX, R, VK} for details. 

Throughout this paper let $i$ denote the imaginary unit $i=\sqrt{-1}$. 
Let $d$ be a positive integer.  
%and put $\mathbb{R}^d=\mathbb{R}^d$. 
Let $\langle \,,\,\rangle$ be the standard inner product on $\mathbb{R}^d$ and put $||x||=\langle x, x\rangle^{1/2}$ 
for $x\in\mathbb{R}^d$.  Let 
$R\subset \mathbb{R}^d$ be a reduced root system, which is not necessarily crystallographic. 
For $\alpha\in R$, we write 
$r_\alpha$ for the reflection with respect to the hyperplane $\alpha^\perp$. 
Let $G$ denote the finite reflection group generated by $\{r_\alpha\,:\,\alpha\in R\}$. 
We fix a positive system $R_+\subset R$.

Let $\mathscr{P}=\mathscr{P}(\mathbb{R}^d)$ denote the space of polynomials on $\mathbb{R}^d$. 
For a non-negative integer $m$, let $\mathscr{P}_m$ denote the space of 
polynomials on $\mathbb{R}^d$ that are homogeneous of degree $m$. 

Let $\kk:R\rightarrow \mathbb{R}_{\geq 0},\,\alpha\mapsto \kk_\alpha$ be a $G$-invariant 
function on $R$. We call $\kk$ a (non-negative) multiplicity function. 
We define
\begin{equation}\label{eqn:const}
\gamma_\kk=\textstyle\sum_{\alpha\in R_+}\kk_\alpha, \quad 
\lambda_\kk=\gamma_\kk+(d-2)/2.
\end{equation}

For $\xi\in\mathbb{R}^d$, let 
$\partial_\xi=\langle \xi,\nabla\rangle$ denote the directional derivative corresponding to  $\xi$ and define
the Dunkl operator $\D_\xi=\D_\xi(\kk)$ by
\begin{equation}\label{eqn:dunkl}
\D_\xi f(x)=\partial_\xi f(x)+\sum_{\alpha\in R_+}\kk_\alpha\langle\alpha,\xi\rangle\frac{f(x)-f(r_\alpha x)}
{\langle\alpha,x\rangle}. 
\end{equation}
The Dunkl operators satisfy 
$[\D_\xi,\D_\eta]=0$ for all $\xi,\,\eta\in\mathbb{R}^d$ (\cite{D}). 
Here $[A,B]:=AB-BA$ for operators $A,\,B$. 
Let $\{e_1,\dots,e_d\}$ be the standard orthonormal basis of $\mathbb{R}^d$. 
We write $\partial_j=\partial_{e_j},\,\,\D_j=\D_{e_j}$. The 
Dunkl Laplacian $\Delta_\kk$ is defined by
\begin{equation}
\Delta_\kk=\sum_{j=1}^d \D_{j}^2.
\end{equation}
It has a following expression (\cite{D}):
\begin{equation}\label{eqn:dl}
\Delta_\kk f(x)=\Delta f(x)+\sum_{\alpha\in R_+}
\frac{2k_\alpha}{\langle\alpha,x\rangle}\partial_\alpha f(x)
-\sum_{\alpha\in R_+}\kk_\alpha\frac{\langle\alpha,\alpha\rangle}{\langle\alpha,x\rangle^2}
\{f(x)-f(r_\alpha x)\}.
\end{equation}
The Dunkl operators are homogeneous of degree $-1$ and the Dunkl Laplacian $\Delta_\kk$ is 
homogeneous of degree $-2$. 

Write $\D=(\D_1,\dots,\D_d)$. For $p\in\mathscr{P}$, 
\[
p(\D)=\sum_{g\in G}\D_p^{(g)}g, 
\]
where $\D_p^{(g)}\,\,(g\in G)$ are differential operators uniquely determined by $p$ and $\kk$. 
Let $L_p$ denote the differential operators defined by
\[
L_p=\sum_{g\in G}\D_p^{(g)}\,\,(=p(\D)|_{\text{$G$-inv.poly.}})
\]
For example, $L_p=\partial_j$ for $p(x)=x_j$ and 
\begin{equation}\label{eqn:lap2}
L_{m_2}=\Delta +\sum_{\alpha\in R_+}
\frac{2k_\alpha}{\langle\alpha,x\rangle}\partial_\alpha\,\,(=\Delta_k|_{\text{$G$-inv.poly.}})
\end{equation}
for $m_2(x)=||x||^2=x_1^2+\cdots+x_d^2$. 

For any $y\in\mathbb{R}^d$, 
there exists a unique real analytic function $x\mapsto \E(x,y)=E(x,y)$ 
such that $\D_j \E(\cdot, y)=y_j \E(\cdot,y)\,\,(1\leq j\leq d)$ and $\E(0,y)=1$ (\cite{dj1}). 
We call $\E(x,y)$ the Dunkl kernel. If $\kk=0$, $E_0(x,y)=\exp (\langle x,y\rangle)$. 

Let $h_\kk(x)$ denote the weight function defined by
\begin{equation}\label{eqn:wt}
h_\kappa(x)=\prod_{\alpha\in R_+}|\langle\alpha,x\rangle|^{\kk_\alpha}.
\end{equation}
The Dunkl transform of a function $f\in L^1(\mathbb{R}^d,h_\kk^2(x)\,dx)$ is defined by
\begin{equation}
(\dt f)(y)=b_\kk\int_{\mathbb{R}^d} f(x)\E(x,-iy)h_\kk(x)\,dx, 
\end{equation}
where 
\begin{equation}
b_\kk=\left(\int_{\mathbb{R}^d} h_\kk^2(x)e^{-||x||^2/2}\,dx\right)^{-1}. 
\end{equation}
If $\kk=0$, then $b_0=(2\pi)^{-d/2}$ and $\mathscr{F}_0$ is the Euclidean Fourier transform. 
The Dunkl transform has very nice properties as those of 
the Euclidean Fourier transform. 
For example, 
\begin{equation}\label{eqn:dtmul}
\dt (M_{x_j}f)=i\D_j \dt f\quad (1\leq j\leq d), 
\end{equation}
where $M_p$ denotes the multiplication operator by $p\in\mathscr{P}$ (\cite[Corollary 7.7.1]{DX})．
Moreover, the inversion formula and the Plancherel formula are known for the Dunkl transform (\cite{dj1}).

\section{Hobson's formula}

Now we state and prove an analogue of Hobson's formula for the Dunkl operators. 

\begin{theorem}\label{thm:hobson2}
If $p\in \mathscr{P}_m$, 
$f_0\in C^\infty((0,\infty))$, and $f(x)=f_0(||x||)$, then
\begin{equation}\label{eqn:hobson}
p(\D) f(x)=\sum_{j=0}^{[m/2]}\frac{1}{2^j j!}
\left[\left(\frac{1}{r}\frac{d}{dr}\right)^{m-j}\! f_0\right]\!(||x||)\cdot \Delta_\kk^j \,p(x).
\end{equation}
\end{theorem}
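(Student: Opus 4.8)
The plan is to argue by induction on the degree $m$ of $p$, peeling off one Dunkl operator at each step in the spirit of \cite{N2}. Two elementary observations drive the whole argument. First, since $G$ is generated by reflections, a radial function $f(x)=f_0(\|x\|)$ is $G$-invariant, so the difference part of each $\D_j$ in \eqref{eqn:dunkl} annihilates it and $\D_j f(x)=\partial_j f(x)=x_j\,[(\tfrac1r\tfrac{d}{dr})f_0](\|x\|)$. Second, whenever $u$ is $G$-invariant one has the Leibniz-type rule $\D_j(uv)=u\,\D_j v+(\partial_j u)v$, because $u(r_\alpha x)=u(x)$ lets the reflection terms factor through. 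In particular $\D_j$ sends a product (radial)$\times$(polynomial) to a sum of such products, so the right-hand side of \eqref{eqn:hobson} lives in the natural space on which the induction will take place.

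For the inductive step I would use Euler's identity $mp=\sum_{j}x_j\partial_j p$ together with the commutativity $[\D_j,\D_k]=0$, which makes $p\mapsto p(\D)$ an algebra homomorphism; hence $p(\D)=\tfrac1m\sum_j \D_j\,(\partial_j p)(\D)$ with $\partial_j p\in\mathscr{P}_{m-1}$. Applying the induction hypothesis to each $\partial_j p$, and then applying the outer $\D_j$ by means of the two observations above, expands $p(\D)f$ as a sum of terms (radial profile)$\times$(polynomial). Collecting the coefficient of the radial profile $[(\tfrac1r\tfrac{d}{dr})^{m-i}f_0](\|x\|)$ and comparing with \eqref{eqn:hobson}, the whole claim reduces to the polynomial identities
\[
\sum_j x_j\,\Delta_\kk^{\,i}\partial_j p+2i\,\Delta_\kk^{\,i-1}\!\sum_j \D_j\partial_j p=m\,\Delta_\kk^{\,i}p\qquad(0\le i\le[m/2]),
\]
the case $i=0$ being just Euler's identity $\sum_j x_j\partial_j p=mp$.

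These identities I would deduce from the single commutator relation $[\Delta_\kk,M_{x_j}]=2\D_j$, where $M_{x_j}$ denotes multiplication by $x_j$. Granting it, and using that $\D_j$ commutes with $\Delta_\kk$, one gets $[\Delta_\kk^{\,i},M_{x_j}]=2i\,\Delta_\kk^{\,i-1}\D_j$, whence
\[
\sum_j M_{x_j}\Delta_\kk^{\,i}\partial_j p
=\Delta_\kk^{\,i}\!\sum_j x_j\partial_j p-2i\,\Delta_\kk^{\,i-1}\!\sum_j\D_j\partial_j p
=m\,\Delta_\kk^{\,i}p-2i\,\Delta_\kk^{\,i-1}\!\sum_j\D_j\partial_j p,
\]
which is exactly the displayed identity. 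The main obstacle is therefore concentrated in establishing $[\Delta_\kk,M_{x_j}]=2\D_j$: writing $\Delta_\kk=\sum_\ell\D_\ell^2$ and using the Cherednik relation $[\D_\ell,M_{x_j}]=\delta_{\ell j}+\sum_{\alpha\in R_+}\tfrac{2\kk_\alpha\alpha_\ell\alpha_j}{\langle\alpha,\alpha\rangle}r_\alpha$ produces, besides the expected $2\D_j$, a sum of reflection terms, and the crux is to check---via the $G$-equivariance $r_\alpha\D_\ell=\D_{r_\alpha e_\ell}r_\alpha$---that these cancel. Once this cancellation is verified the induction closes; the only delicate points are the bookkeeping of indices in the collection step and this cancellation of the difference-operator contributions, everything else reducing to the classical Euler computation.
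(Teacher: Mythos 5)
Your proposal is correct and is essentially the paper's own argument (which in turn follows Nomura): induction on $m$ via the Euler identity $p(\D)=\tfrac{1}{m}\sum_j \D_j\,(\partial_j p)(\D)$, the Leibniz rule for $\D_j$ applied to a product with a $G$-invariant radial factor, and the commutator $[\Delta_\kk^{\,i},M_{x_j}]=2i\,\D_j\Delta_\kk^{\,i-1}$ --- your displayed per-coefficient polynomial identity is exactly the paper's collection step in disguise. The only divergence is cosmetic: where the paper simply cites $[\Delta_\kk,M_{x_j}]=2\D_j$ from Dunkl, you rederive it from $[\D_\ell,M_{x_j}]=\delta_{\ell j}+\sum_{\alpha\in R_+}\tfrac{2\kk_\alpha\alpha_\ell\alpha_j}{\langle\alpha,\alpha\rangle}r_\alpha$, and your sketched cancellation of the reflection terms does go through, since $\sum_\ell\alpha_\ell(\D_\ell r_\alpha+r_\alpha\D_\ell)=\D_{\alpha+r_\alpha\alpha}\,r_\alpha=0$ because $r_\alpha\alpha=-\alpha$.
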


\smallskip
Since $f(x)$ is $G$-invariant, the left hand side of \eqref{eqn:hobson} coincides with $L_pf(x)$. 
If 
$\kk=0$, then Theorem~\ref{thm:hobson2} is the original Hobson's formula (\cite{Hobson1}, \cite[p. 124]{Hobson2}). 
If $G=\mathbb{Z}_2^d$,  then Theorem~\ref{thm:hobson2} is given by Volkmer \cite[Theorem 6]{volkmer}. 

\begin{example}{\rm
If $p(x)=||x||^2$, then $m=2$ and $p(\D)=\Delta_\kk$. By Theorem~\ref{thm:hobson2} we have 
\begin{equation}\label{eqn:laprad}
\Delta_\kk f(x)=L_{m_2} f(x)=f_0''(r)+\frac{d-1+2\gamma_k}{r}f_0'(r)\quad (r:=||x||).
\end{equation}

If we put $f(x)=e^{-||x||^2/2}$ in Theorem~\ref{thm:hobson2}, then we have
\begin{equation}\label{eqn:diffgauss}
p(\D) e^{-||x||^2/2}=\sum_{j=0}^{[m/2]}\frac{(-1)^{m-j}}{2^j j!}
e^{-||x||^2/2} \Delta_\kk^j \,p(x).
\end{equation}
}
\end{example}

\noindent
\textbf{Proof of Theorem~\ref{thm:hobson2}}\,\, 
First notice that the upper bound $[m/2]$ in the summation in \eqref{eqn:hobson} may 
be replaced by $m$. 
We prove the theorem by induction on $m$. 
The case $m=0$ is trivial. Assume that \eqref{eqn:hobson} holds for any element of $\mathscr{P}_m$. 
Let $p\in\mathscr{P}_{m+1}$. Then $E p=(m+1)p$, where $E$ denotes the Euler operator 
\[
E=\displaystyle\sum_{l=1}^d x_l{\partial_l}. 
\]
Thus 
\[
p(\D)=\frac{1}{m+1}\sum_{l=1}^d \D_l\,(\partial_l p)(\D). 
\]
Since $\partial_l p\in\mathscr{P}_m$, it follows from the induction hypothesis that
\begin{equation}\label{eqn:temp1}
p(\D)f(x)=\frac{1}{m+1}\sum_{l=1}^d\sum_{j=0}^{m} \frac{1}{2^j j!}
\D_l\left\{
\left[\left(\frac{1}{r}\frac{d}{dr}\right)^{m-j}f_0\right](||x||)\cdot \Delta_\kk^j \partial_l p(x)
\right\}.
\end{equation}
By the definition of the Dunkl operator \eqref{eqn:dunkl} and the chain rule, we have 
\begin{align*}
\D_l & \left\{
\left[\left(\frac{1}{r}\frac{d}{dr}\right)^{m-j}f_0\right](||x||)\cdot \Delta_\kk^j \partial_l p(x)
\right\}   \\ & = \left[\left(\frac{1}{r}\frac{d}{dr}\right)^{m+1-j}f_0\right](||x||)\cdot x_l \Delta_\kk^j \partial_l p(x)
+\left[\left(\frac{1}{r}\frac{d}{dr}\right)^{m-j}f_0\right](||x||)\cdot \D_l\Delta_\kk^j \partial_l p(x) \\
&= \left[\left(\frac{1}{r}\frac{d}{dr}\right)^{m+1-j}f_0\right](||x||)\cdot\Delta_\kk^j x_l \partial_l p(x) 
-\left[\left(\frac{1}{r}\frac{d}{dr}\right)^{m+1-j}f_0\right](||x||)\cdot 2j \D_l\Delta_\kk^{j-1} \partial_l p(x) \\
& +\left[\left(\frac{1}{r}\frac{d}{dr}\right)^{m-j}f_0\right](||x||)\cdot \D_l\Delta_\kk^j \partial_l p(x).
\end{align*}
The last equality follows from 
\begin{equation}\label{eqn:com00}
[\Delta_\kk^j,M_{x_l}]=2j\D_l\Delta_k^{j-1},
\end{equation}
which is an easy consequence of  
$[\Delta_\kk,M_{x_l}]=2\D_l$ (\cite[Proposition 2.2]{D}, \cite[Lemma~7.1.9]{DX}). 
Substituting the above expression into \eqref{eqn:temp1}, we have
\begin{align*}
p(\D) & =\sum_{j=0}^m \frac{1}{2^j j!} \left[\left(\frac{1}{r}\frac{d}{dr}\right)^{m+1-j}f_0\right](||x||)\cdot
\Delta_\kk^j p(x) \\
& -\sum_{l=1}^d \sum_{j=0}^m 
\frac{1}{2^{j-1}(j-1)!}\left[\left(\frac{1}{r}\frac{d}{dr}\right)^{m+1-j}f_0\right](||x||)\cdot \D_l\Delta_\kk^{j-1} \partial_l p(x) \\
& +\sum_{l=1}^d \sum_{j=0}^m \frac{1}{2^j j!}\left[\left(\frac{1}{r}\frac{d}{dr}\right)^{m-j}f_0\right](||x||)\cdot \D_l\Delta_\kk^j \partial_l p(x) \\
& =\sum_{j=0}^{m+1} \frac{1}{2^j j!} \left[\left(\frac{1}{r}\frac{d}{dr}\right)^{m+1-j}f_0\right](||x||)
\cdot \Delta_\kk^j\, p(x), 
\end{align*}
by using $Ep=(m+1)p$ and changing a summation index. Hence the Theorem is proved. 
\hfill $\square$

\begin{remark}
{\rm 
We imitate the proof of Hobson's formula for $\kk=0$ given by Nomura \cite{N2} in the proof of Theorem~\ref{thm:hobson2}. 
Use of the Euler operator and induction on the degree of $p$ also work well in our case. 
In the case of $\kk=0$,  Nomura proceeds after \eqref{eqn:temp1} by 
using $[\partial_l,\Delta^j]=0$ and $[\Delta^j, E]=2j\Delta^j$. 
Instead of doing in the similar way,  we use \eqref{eqn:com00}, because for general $\kk$ we do not have 
a nice formula for $[\partial_l,\Delta_\kk^j]$. 
}
\end{remark}

\begin{remark}
{\rm Theorem~\ref{thm:hobson2} is closely related with the formula (\cite[Proposition 3.4]{H})
\[
p(\D)=\frac{1}{m!}\left(\text{ad}\,\frac{\Delta_k}{2}\right)^m\! M_p\quad (p\in \mathscr{P}_m), 
\]
which can be proved by using $[\Delta_\kk,M_{x_l}]=2\D_l$ (see \cite[Section 3]{dj2}). 
}
\end{remark}

\section{Applications of Hobson's formula}

In this section, we give some applications of Hobson's formula (Theorem~\ref{thm:hobson2}). 
Though they are all known 
results, Hobson's formula  provides simpler alternative proofs. 

\subsection{$\kk$-harmonic polynomials
}

A polynomial $p$ is called $\kk$-harmonic if $\Delta_\kk \,p=0$. 
%Here $h$ indicates 
%the weight function $h=h_\kk$ \eqref{eqn:wt}, which contains data of the root system $R$ and 
%the multiplicity function $\kk$. 
Let $\mathscr{H}_{m,\kk}$ denote the space of $\kk$-harmonic polynomials in $\mathscr{P}_m$. 

An explicit formula for the projection operator from $\mathscr{P}_m$ to $\mathscr{H}_{m,\kk}$ 
is derived from Hobson's formula. 

It follows from \eqref{eqn:laprad} by putting $f(x)=||x||^s$ that 
\[
\Delta_\kk ||x||^s=s(s+2\lambda_\kk)||x||^{s-2}.
\]
In particular, $\Delta_\kk ||x||^{-2\lambda_\kk}=0$.

Applying Theorem~\ref{thm:hobson2} to $f(x)=||x||^{-2\lambda_\kk}$, we have
\begin{equation}\label{eqn:maxwell}
p(\D)(||x||^{-2\lambda_\kk}) 
=\sum_{j=0}^{[m/2]}\frac{(-1)^{m-j}2^m (\lambda_\kk)_{m-j}}{2^{2j} j!}
||x||^{-2(\lambda_\kk+m-j)} \Delta_\kk^j \,p(x).
\end{equation}
Thus we have
\begin{equation}\label{eqn:maxwell2}
p(x)=||x||^{2\lambda_\kk+2m}p(\D)(||x||^{-2\lambda_\kk})-\sum_{j=1}^{[m/2]}
\frac{1}{2^{2j}j!(-\lambda_\kk-m+1)_j}||x||^{2j} \Delta_{\kk}^j\, p(x).
\end{equation}
The first term 
$||x||^{2\lambda_\kk+2m}p(\D)(||x||^{-2\lambda_\kk})$ of the right hand side of 
\eqref{eqn:maxwell2} is $\kk$-harmonic by \cite[Theorem 2.3]{X} and the 
remaining term is divisible by $||x||^2$. 
Define the Clebsch projection $\text{proj}_{m,\kk}$ by
\begin{equation}
\text{proj}_{m,\kk}\, p(x)=||x||^{2\lambda_\kk+2m}p(\D)(||x||^{-2\lambda_\kk})\quad (p\in \mathscr{P}_m). 
\end{equation}
Then by \eqref{eqn:maxwell} we have
\begin{equation}\label{eqn:cp}
\text{proj}_{m,\kk}\,p(x)=
\sum_{j=0}^{[m/2]}
\frac{1}{2^{2j}j!(-\lambda_\kk-m+1)_j}||x||^{2j} \Delta_{\kk}^j\, p(x). 
\end{equation}
The formula  
\eqref{eqn:cp} is proved by Dunkl (\cite{D0}, \cite[Theorem 7.1.15]{DX}) using a different method. 

If $p\in\mathscr{H}_{m,\kk}$, then $\text{proj}_{m,\kk} \,p(x)=p(x)$. Hence the mapping 
$\text{proj}_{m,\kk}:\mathscr{P}_m\rightarrow \mathscr{H}_{m,\kk}$ is surjective, 
which gives Maxwell's representation formula for $\kk$-harmonic functions. 
The Clebsch projection 
$\text{proj}_{m,\kk}$ is not injective. Choices of a subset  of $\mathscr{P}_{\kk}$ 
whose image under $\text{proj}_{m,\kk}$  gives a basis of $\mathscr{H}_{m,\kk}$ 
are studied by Xu \cite{X}. 
If $\kk=0$,  formulae for $\text{proj}_{m,0}$ and its surjectivity are classical well-known facts 
(see \cite{Hobson2}, \cite[Vol. I, Ch. VII, \S4, 5]{CH}, \cite[Ch~1, \S 6]{M2}, \cite[Ch. IV \S~2.5]{V}).

\subsection{Bochner-Hecke identity}

As in the Euclidean case (\cite{S1, S2}), an analogue of the Bochner-Hecke identity for the Dunkl transform 
follows from Hobson's formula (Theorem~\ref{thm:hobson2}). 

Let $d\omega$ denote the surface measure on $S^{d-1}$ induced from the measure $dx$ on $\mathbb{R}^d$. 
Define
\begin{equation}
\sigma_{d-1,\kk}=\int_{S^{d-1}}h_\kk(x)\,d\omega(x). 
\end{equation}
Using polar coordinates we have
\begin{equation}
b_\kk^{-1}=2^{\lambda_\kk} \sigma_{d-1,\kk}{\Gamma(\lambda_\kk+1)}.
\end{equation}
By \cite[Proposition~2.8]{D92} (see also \cite[Theorem 1.4.2]{VK}), we have
\begin{equation}\label{eqn:dts}
\int_{S^{d-1}}\E(x,-iy)h_\kk^2(x)\,d\omega(x)=\sigma_{d-1,\kk}\Gamma(\lambda_\kk+1) \left(\frac{||y||}{2}\right)^{-\lambda_\kk}
\! J_{\lambda_\kk}(||y||). 
\end{equation}
Here $J_\nu(r)$ is the Bessel function of the first kind
\begin{equation}
J_\nu(r)=\sum_{j=0}^\infty \frac{(-1)^j (r/2)^{\nu+2j}}{j!\,\Gamma(\nu+j+1)}.
\end{equation}

The following proposition is proved by Gonz\'alez Vieli \cite[Lemma 3.1]{GV}. 
We give a simple proof by using Hobson's formula (Theorem~\ref{thm:hobson2}). 

\begin{proposition}\label{prop:sph1}
%[Gonz\'alez Vieli \cite{GV}]
For $p\in\mathscr{P}_m$, we have
\[
b_\kk\int_{S^{d-1}}p(x) \E(x,-iy)h_\kk^2(x)\,d\omega(x) =(-i)^m 
\sum_{j=0}^{[m/2]}\frac{(-1)^j}{2^j j!}\frac{J_{\lambda_\kk+m-j}(||y||)}{||y||^{\lambda_\kk+m-j}}\Delta_\kk^j \,p(y).
\]
In particular, if $p\in \mathscr{H}_{m,\kk}$, then
\begin{equation}\label{eqn:sph2}
b_\kk\int_{S^{d-1}}p(x)\E(x,-iy)h_\kk^2(x)\,d\omega(x)= 
\frac{J_{\lambda_\kk+m}(||y||)}{||y||^{\lambda_\kk+m}} p(-iy).
\end{equation}
\end{proposition}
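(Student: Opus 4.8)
The plan is to move the polynomial factor $p(x)$ across the integral by converting multiplication by $p(x)$ into the action of $p(\D)$ in the $y$-variable, and then to reduce everything to a radial computation governed by Theorem~\ref{thm:hobson2}. First I would record an eigenfunction relation for the Dunkl kernel. From $\D_j\E(\cdot,y)=y_j\E(\cdot,y)$ together with the symmetry $\E(x,y)=\E(y,x)$ and the fact that each $\D_j$ is homogeneous of degree $-1$, applying the Dunkl operator in the $y$-variable to the entire extension $y\mapsto\E(x,-iy)$ yields
\[
\D_j\E(x,-iy)=-i\,x_j\,\E(x,-iy)\qquad(1\le j\le d).
\]
Since the Dunkl operators commute and $-ix_j$ is constant in $y$, the function $\E(x,-iy)$ is a simultaneous eigenfunction, so for $p\in\mathscr{P}_m$ homogeneity gives $p(\D)\E(x,-iy)=(-i)^m p(x)\E(x,-iy)$, that is,
\[
p(x)\,\E(x,-iy)=i^m\,p(\D)\E(x,-iy),
\]
where here $\D$ denotes the Dunkl operators acting in $y$.

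Next I would substitute this into the left-hand side and pull $p(\D)$ out of the integral (justified by the real-analyticity and locally uniform bounds of the kernel), reducing to
\[
b_\kk\int_{S^{d-1}}p(x)\E(x,-iy)h_\kk^2(x)\,d\omega(x)
=i^m\,p(\D)\!\left[b_\kk\int_{S^{d-1}}\E(x,-iy)h_\kk^2(x)\,d\omega(x)\right].
\]
By \eqref{eqn:dts} and the relation $b_\kk^{-1}=2^{\lambda_\kk}\sigma_{d-1,\kk}\Gamma(\lambda_\kk+1)$, the bracketed integral collapses to the radial function $f_0(||y||)$ with $f_0(r)=J_{\lambda_\kk}(r)/r^{\lambda_\kk}$. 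Now Theorem~\ref{thm:hobson2} applies directly to $p(\D)f_0(||y||)$, and the radial derivatives are evaluated by the classical Bessel identity $\bigl(\tfrac1r\tfrac{d}{dr}\bigr)^{n}\!\bigl(J_\nu(r)/r^\nu\bigr)=(-1)^n J_{\nu+n}(r)/r^{\nu+n}$, which follows by iterating $\tfrac{d}{dr}\bigl(r^{-\nu}J_\nu(r)\bigr)=-r^{-\nu}J_{\nu+1}(r)$. Collecting the resulting factors via $i^m(-1)^{m-j}=(-i)^m(-1)^j$ then produces exactly the stated sum.

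Finally, for $p\in\mathscr{H}_{m,\kk}$ one has $\Delta_\kk p=0$, so only the $j=0$ term survives, and the homogeneity identity $p(-iy)=(-i)^m p(y)$ converts $(-i)^m p(y)$ into $p(-iy)$, giving \eqref{eqn:sph2}. The main obstacle is the first step: establishing $\D_j\E(x,-iy)=-i\,x_j\,\E(x,-iy)$ rigorously, since it requires the symmetry of the Dunkl kernel together with careful bookkeeping of the complex dilation $y\mapsto-iy$ against the degree $-1$ homogeneity of $\D_j$; once this eigenfunction relation is secured, the remainder is a clean computation driven by Theorem~\ref{thm:hobson2}.
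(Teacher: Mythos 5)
Your proposal is correct and follows essentially the same route as the paper's proof, which likewise applies $p(\D)$ in the $y$-variable to \eqref{eqn:dts}, uses the eigenfunction relation $\D_j\E(x,-iy)=-ix_j\E(x,-iy)$, and finishes with Theorem~\ref{thm:hobson2} together with the Bessel identity $\frac{1}{r}\frac{d}{dr}\bigl(r^{-\nu}J_\nu(r)\bigr)=-r^{-\nu-1}J_{\nu+1}(r)$. Your sign bookkeeping $i^m(-1)^{m-j}=(-i)^m(-1)^j$ and the reduction of the bracketed integral to $f_0(r)=J_{\lambda_\kk}(r)/r^{\lambda_\kk}$ are both accurate, so you have simply filled in the details that the paper leaves implicit.
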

\begin{proof} \,By applying $p(\D)$ with the variable $y$ to \eqref{eqn:dts} and using  
$\D_j \E(x,-iy)=-ix_j\E(x,-iy)$, Theorem~\ref{thm:hobson2} and a formula for the Bessel function \cite[Ch VII, (51)]{E}
\[
\frac{1}{r}\frac{d}{dr}(r^{-\nu}J_\nu(r))=-r^{-\nu-1}J_{\nu+1}(r), 
\]
the proposition follows. 
\end{proof}

The formula 
\eqref{eqn:sph2} is given by \cite[Theorem 3.1]{T}. For 
$\kappa=0$, \eqref{eqn:sph2} is given by \cite[Lemma 2.6.2]{B2}, \cite[Lemma 9.10.2]{AAR}.

We have the following Theorem (\cite[Theorem 4.1]{GV}) as a corollary of 
Proposition~\ref{prop:sph1}. 

\begin{theorem}\label{thm:bochner}
%[Gonz\'alez Vieli \cite{GV}]
For $p\in\mathscr{P}_m$ and a radial function $f(x)=f_0(||x||)$ we have
\[
\dt (M_p f)(y)
=(-i)^m 
\sum_{j=0}^{[m/2]}\frac{(-1)^j}{2^j j!}(\mathscr{H}_{\lambda_\kk+m-j}f_0)(||y||)\,\Delta_\kk^j \,p(y), 
\]
where 
\[
(\mathscr{H}_\nu f)(s)=\int_0^\infty f(r)\frac{J_\nu(rs)}{(rs)^\nu}r^{2\nu+1}dr
\]
denote the Hankel transform. In particular, if $p\in \mathscr{H}_{m,\kk}$, then 
\begin{equation}\label{eqn:hecke1}
\dt (M_p f)(y)=(-i)^m  (\mathscr{H}_{\lambda_\kk+m}f_0)(||y||)\, p(y).
\end{equation}
\end{theorem}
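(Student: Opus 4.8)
The plan is to reduce Theorem~\ref{thm:bochner} to Proposition~\ref{prop:sph1} by passing to polar coordinates and exploiting the homogeneity built into every factor of the integrand. First I would write out the Dunkl transform explicitly,
\[
\dt(M_p f)(y)=b_\kk\int_{\mathbb{R}^d}p(x)f_0(||x||)\E(x,-iy)h_\kk^2(x)\,dx,
\]
and substitute $x=r\omega$ with $r=||x||\geq 0$ and $\omega\in S^{d-1}$, so that $dx=r^{d-1}\,dr\,d\omega$. The key observation is that each factor carries a clean degree of homogeneity: $p(r\omega)=r^m p(\omega)$ since $p\in\mathscr{P}_m$, $h_\kk^2(r\omega)=r^{2\gamma_\kk}h_\kk^2(\omega)$ by \eqref{eqn:wt}, and the Dunkl kernel satisfies the scaling identity $\E(r\omega,-iy)=\E(\omega,-iry)$. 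Collecting these, the integral factors into a radial integral against $r$ and an angular integral over $S^{d-1}$ of exactly the shape handled by Proposition~\ref{prop:sph1}, but with $y$ replaced by $ry$.

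Next I would apply Proposition~\ref{prop:sph1} to the angular integral with spectral parameter $ry$ in place of $y$. This produces the finite sum over $j$ with Bessel factors $J_{\lambda_\kk+m-j}(r||y||)/(r||y||)^{\lambda_\kk+m-j}$ and terms $\Delta_\kk^j p(ry)$. Since $\Delta_\kk$ is homogeneous of degree $-2$, the polynomial $\Delta_\kk^j p$ is homogeneous of degree $m-2j$, so $\Delta_\kk^j p(ry)=r^{m-2j}\Delta_\kk^j p(y)$, and all of the $r$-dependence can be pulled out of the angular part. After interchanging the finite sum with the radial integral, the coefficient of $\Delta_\kk^j p(y)$ is a single radial integral in $r$.

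The final step is to recognize each of these radial integrals as a Hankel transform. Tracking the exponent of $r$ coming from $dx$, from $p$, from $h_\kk^2$, and from $\Delta_\kk^j p(ry)$, one finds it equals $2m+2\gamma_\kk+d-1-2j$; using the definition $\lambda_\kk=\gamma_\kk+(d-2)/2$ from \eqref{eqn:const}, equivalently $2\gamma_\kk+d=2\lambda_\kk+2$, this exponent is exactly $2(\lambda_\kk+m-j)+1$. Setting $\nu=\lambda_\kk+m-j$, the radial integral is precisely $(\mathscr{H}_\nu f_0)(||y||)$, and assembling the terms gives the stated formula. Restricting to $p\in\mathscr{H}_{m,\kk}$, only the $j=0$ summand survives, yielding \eqref{eqn:hecke1}.

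I expect the main technical point to be the justification of the polar decomposition and the interchange of integration with the angular integral, i.e.\ an implicit integrability hypothesis on $f_0$ (or a Schwartz-class assumption on $M_p f$) guaranteeing $M_p f\in L^1(\mathbb{R}^d,h_\kk^2\,dx)$ together with convergence of the Hankel integrals. The algebraic content is essentially bookkeeping: the scaling property of $\E$, the homogeneity of $\Delta_\kk^j p$, and the index-matching $2\gamma_\kk+d-2=2\lambda_\kk$ that converts each radial integral into the correct Hankel transform.
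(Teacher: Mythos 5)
Your proposal is correct and is essentially the paper's own argument: the paper states Theorem~\ref{thm:bochner} as a direct corollary of Proposition~\ref{prop:sph1} without writing out details, and your polar-coordinate decomposition using the homogeneity $p(r\omega)=r^m p(\omega)$, $h_\kk^2(r\omega)=r^{2\gamma_\kk}h_\kk^2(\omega)$, $\Delta_\kk^j p(ry)=r^{m-2j}\Delta_\kk^j p(y)$, and the kernel scaling $\E(r\omega,-iy)=\E(\omega,-iry)$ is precisely the intended (implicit) computation. Your exponent bookkeeping $2m+2\gamma_\kk+d-1-2j=2(\lambda_\kk+m-j)+1$ correctly identifies each radial integral with the Hankel transform $\mathscr{H}_{\lambda_\kk+m-j}$, so nothing further is needed.
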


If $\kappa=0$, Theorem \ref{thm:bochner} is given by \cite[Theorem 1]{S2}. If $\kk=0$, 
\eqref{eqn:hecke1} is given by \cite[Theorem 2]{B1}, \cite[Theorem 2.6.1]{B2}. 
For general $\kappa$, \eqref{eqn:hecke1} is given by \cite[Theorem 3.15]{BS} and \cite[Theorem 3.2]{T}. 

Since $\mathscr{H}_{\lambda_\kk+m}f_0=f_0$ for $f_0 (r)=e^{-r^2/2}$ (\cite[8.6 (10)]{E2}), 
we have an analogue of the  Bochner-Hecke identity from Theorem~\ref{thm:bochner}. 

\begin{corollary}\label{cor:hecke}
For $p\in\mathscr{P}_m$ we have
\[
b_\kk\int_{\mathbb{R}^d}p(x)\,e^{-||x||^2/2}\,h_\kk^2 (x)\,dx
=(-i)^m   \,e^{-||y||^2/2}
\sum_{j=0}^{[m/2]}\frac{(-1)^j}{2^j j!}\,\Delta_\kk^j\, p(y). 
\]
In particular, if $p\in\mathscr{H}_{m,\kk}$, then
\begin{equation}\label{eqn:hecke2}
b_\kk \int_{\mathbb{R}^d}p(x)\,e^{-||x||^2/2}\,h_\kk^2 (x)\,dx
=(-i)^m\,p(y)\,e^{-||y||^2/2}. 
\end{equation}
\end{corollary}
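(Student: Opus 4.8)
The plan is to obtain Corollary~\ref{cor:hecke} simply by specializing Theorem~\ref{thm:bochner} to the Gaussian radial function. First I would take $f_0(r)=e^{-r^2/2}$, so that $f(x)=e^{-||x||^2/2}$, and write out the left-hand side of Theorem~\ref{thm:bochner} explicitly: by the definition of the Dunkl transform and of the multiplication operator $M_p$,
\[
\dt(M_p f)(y)=b_\kk\int_{\mathbb{R}^d}p(x)\,e^{-||x||^2/2}\,\E(x,-iy)\,h_\kk^2(x)\,dx,
\]
which is the quantity on the left of the asserted identity.

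The key step is the fact, recorded just before the corollary and cited from \cite[8.6 (10)]{E2}, that the Gaussian is a fixed point of the Hankel transform of \emph{every} order: $\mathscr{H}_\nu f_0=f_0$ for $f_0(r)=e^{-r^2/2}$ and all $\nu$. I would check this directly against the normalization used in Theorem~\ref{thm:bochner}, starting from
\[
\int_0^\infty e^{-r^2/2}\,J_\nu(rs)\,r^{\nu+1}\,dr=s^{\nu}e^{-s^2/2},
\]
which gives $(\mathscr{H}_\nu f_0)(s)=s^{-\nu}\cdot s^{\nu}e^{-s^2/2}=e^{-s^2/2}$. The point to stress is that this holds for each of the orders $\nu=\lambda_\kk+m-j$ appearing in the sum of Theorem~\ref{thm:bochner}, not merely for the top order $\nu=\lambda_\kk+m$.

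With this in hand, I would substitute $\mathscr{H}_{\lambda_\kk+m-j}f_0=e^{-(\cdot)^2/2}$ into the formula of Theorem~\ref{thm:bochner}, so that every summand carries the common factor $e^{-||y||^2/2}$; pulling this factor out of the sum yields the general identity. The ``in particular'' assertion for $p\in\mathscr{H}_{m,\kk}$ is then immediate, either by specializing \eqref{eqn:hecke1} in the same way, or by noting that $\Delta_\kk p=0$ annihilates every term with $j\geq1$ and leaves only the $j=0$ contribution $(-i)^m p(y)\,e^{-||y||^2/2}$. I do not anticipate any genuine obstacle: all the substance is in Theorem~\ref{thm:bochner}, and the only thing requiring care is matching the normalization of $\mathscr{H}_\nu$ so that the eigenvalue is exactly $1$ for all the orders simultaneously.
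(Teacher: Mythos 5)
Your proposal is correct and is essentially the paper's own proof: the paper likewise obtains Corollary~\ref{cor:hecke} by specializing Theorem~\ref{thm:bochner} to $f_0(r)=e^{-r^2/2}$ and invoking the fixed-point property of the Hankel transform from \cite[8.6 (10)]{E2}. Your explicit check that $\mathscr{H}_\nu f_0=f_0$ holds for every order $\nu=\lambda_\kk+m-j$ appearing in the sum (the paper's one-line proof mentions only $\nu=\lambda_\kk+m$, though the general identity needs all of them) merely makes explicit what the paper's argument implicitly uses.
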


If $\kappa=0$, then \eqref{eqn:hecke2} is the Bochner-Hecke identity 
(cf. \cite[Theorem 2.6.3]{B2}, \cite[Theorem 3.10]{SW}, \cite[Theorem 9.10.3]{AAR}). 
\eqref{eqn:hecke2} for general $\kappa$ is given by \cite[Example 3.16]{BS}.

We have the following corollary from 
\eqref{eqn:diffgauss} and Corollary~\ref{cor:hecke}. 

\begin{corollary}
The following conditions {\rm (1)$\sim$(3)} for 
$p\in\mathscr{P}_m$ are mutually equivalent. \\
{\rm (1)} $p\in\mathscr{H}_{m,\kk}$ 
\\
{\rm (2)} $b_\kk\displaystyle\int_{\mathbb{R}^d}p(x)\,e^{-||x||^2/2}\,h_\kk^2 (x)\,dx
=(-i)^m  \,p(y)\,e^{-||y||^2/2}$
\\
{\rm (3)} $p(\D)e^{-||x||^2/2}=p(-x)e^{-||x||^2/2}$
\end{corollary}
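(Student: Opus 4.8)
The plan is to treat condition (1) as a hub and prove (1)$\Leftrightarrow$(3) and (1)$\Leftrightarrow$(2) separately, each by comparing the asserted identity with a formula already at hand. One structural fact powers both arguments: since $\Delta_\kk$ is homogeneous of degree $-2$, for $p\in\mathscr{P}_m$ the iterates $p,\ \Delta_\kk p,\ \Delta_\kk^2 p,\ \dots,\ \Delta_\kk^{[m/2]}p$ lie in $\mathscr{P}_m,\ \mathscr{P}_{m-2},\ \mathscr{P}_{m-4},\ \dots$, that is, in pairwise distinct homogeneous components. Consequently a linear combination $\sum_j c_j\,\Delta_\kk^j p$ with all $c_j\neq 0$ vanishes if and only if each $\Delta_\kk^j p$ vanishes; in particular a tail sum over $j\geq 1$ is zero exactly when $\Delta_\kk p=0$, since $\Delta_\kk p=0$ forces $\Delta_\kk^j p=0$ for every $j\geq 1$, while conversely the unique degree-$(m-2)$ term, a nonzero multiple of $\Delta_\kk p$, cannot be cancelled by the strictly lower-degree terms.

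For (1)$\Leftrightarrow$(3) I would start from \eqref{eqn:diffgauss}, which writes $p(\D)e^{-||x||^2/2}$ as $e^{-||x||^2/2}\sum_{j=0}^{[m/2]}\frac{(-1)^{m-j}}{2^j j!}\Delta_\kk^j p(x)$. Because $p$ is homogeneous of degree $m$ we have $p(-x)=(-1)^m p(x)$, so the $j=0$ term of this sum is exactly $p(-x)e^{-||x||^2/2}$, the right-hand side of (3). Hence (3) holds if and only if the tail $\sum_{j=1}^{[m/2]}\frac{(-1)^{m-j}}{2^j j!}\Delta_\kk^j p(x)$ vanishes, which by the grading remark is equivalent to $\Delta_\kk p=0$, i.e.\ to $p\in\mathscr{H}_{m,\kk}$.

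For (1)$\Leftrightarrow$(2) the shape is identical. The general formula of Corollary~\ref{cor:hecke} has right-hand side $(-i)^m e^{-||y||^2/2}\sum_{j=0}^{[m/2]}\frac{(-1)^j}{2^j j!}\Delta_\kk^j p(y)$, whose $j=0$ term is precisely $(-i)^m p(y)e^{-||y||^2/2}$, the value asserted in (2) (which is \eqref{eqn:hecke2}). Thus (2) holds if and only if $\sum_{j=1}^{[m/2]}\frac{(-1)^j}{2^j j!}\Delta_\kk^j p(y)=0$; this tail differs from the one in the (3)-analysis only by the nonzero factor $(-1)^m$, so once more the distinct-degrees argument makes (2) equivalent to $\Delta_\kk p=0$. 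In effect both equivalences collapse to a single computation.

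The step carrying the real weight is not any manipulation but the grading observation: recognizing that the terms with $j\geq 1$ occupy strictly lower homogeneous degrees shows that no cancellation can occur among them unless each is zero, so that the degree-$(m-2)$ term, a nonzero multiple of $\Delta_\kk p$, cannot be absorbed by the rest. Everything else—matching the $j=0$ term of \eqref{eqn:diffgauss} and of Corollary~\ref{cor:hecke} with the right-hand sides of (3) and (2) respectively, and using $p(-x)=(-1)^m p(x)$—is bookkeeping.
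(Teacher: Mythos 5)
Your proposal is correct and follows exactly the route the paper intends: it derives the equivalences from \eqref{eqn:diffgauss} and Corollary~\ref{cor:hecke} (the paper states the corollary follows from precisely these two facts without writing out details), and your grading observation that the terms $\Delta_\kk^j p\in\mathscr{P}_{m-2j}$ lie in distinct homogeneous components is the natural step the paper leaves implicit. Nothing is missing; your write-up simply makes the paper's one-line justification explicit.
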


\subsection{Pizzetti's formula}
We have Pizzetti's formula for the Dunkl analysis as a corollary of Proposition~\ref{prop:sph1}. 

\begin{corollary}\label{cor:pizzetti}
For $p\in\mathscr{P}$ we have
\begin{equation}\label{eqn:pizetti}
b_\kk\int_{S^{d-1}}\,p(x)h_\kappa ^2 (x)\,d\omega(x)=
\sum_{l=0}^\infty \frac{(-1)^l\,2^{-\lambda_\kk}}{2^{2l}\,l!\,\Gamma(\lambda_\kappa+l+1)}\,(\Delta_\kappa^l \,p)(0).
\end{equation}
\end{corollary}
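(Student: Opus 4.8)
The plan is to derive Pizzetti's formula simply by letting $y\to 0$ in Proposition~\ref{prop:sph1}. Since $\E(x,0)=1$, the left-hand side of the identity in Proposition~\ref{prop:sph1} degenerates at $y=0$ to exactly the spherical integral $b_\kk\int_{S^{d-1}}p(x)h_\kk^2(x)\,d\omega(x)$ that appears on the left of \eqref{eqn:pizetti}; passing this limit under the integral sign is legitimate because $S^{d-1}$ is compact and $\E(x,-iy)\to 1$ uniformly there. Thus the whole matter reduces to evaluating the $y\to 0$ limit of the right-hand side of Proposition~\ref{prop:sph1}. By linearity of both sides of \eqref{eqn:pizetti} I would first reduce to a homogeneous $p\in\mathscr{P}_m$, writing a general $p$ as $\sum_m p_m$ and treating each $p_m$ separately; note that the sum over $l$ in \eqref{eqn:pizetti} is in fact finite, since $\Delta_\kk^l p$ vanishes as soon as $2l$ exceeds $\deg p$, so no convergence issue arises.

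For homogeneous $p\in\mathscr{P}_m$ the term $\Delta_\kk^j p(y)$ is homogeneous of degree $m-2j$, and I would combine this with the elementary small-argument asymptotics
\[
\lim_{r\to 0}\frac{J_\nu(r)}{r^\nu}=\frac{1}{2^{\nu}\,\Gamma(\nu+1)},
\]
read directly off the defining series for $J_\nu$. Taking $y\to 0$ termwise in the finite sum of Proposition~\ref{prop:sph1}, each factor $J_{\lambda_\kk+m-j}(\|y\|)/\|y\|^{\lambda_\kk+m-j}$ tends to a finite constant while $\Delta_\kk^j p(y)\to 0$ whenever $m-2j>0$. Hence every term with $m-2j>0$ drops out, and a nonzero contribution survives only when $m$ is even, say $m=2l$, for the index $j=l$; there $\Delta_\kk^{l}p$ is a constant equal to $(\Delta_\kk^{l}p)(0)$. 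When $m$ is odd, every term vanishes, matching the fact that $(\Delta_\kk^l p)(0)=0$ for all $l$ on an odd-degree homogeneous polynomial. Summing the contributions of the homogeneous components then reassembles the full series $\sum_{l=0}^\infty$ of \eqref{eqn:pizetti}, the component of degree $2l$ feeding precisely the $l$-th summand.

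The only real point requiring care is the bookkeeping of the constants in the surviving $j=l$ term: one must combine the prefactor $(-i)^m=(-i)^{2l}$, the sign $(-1)^j$ at $j=l$, the factor $1/(2^l l!)$, and the Bessel/power-of-two normalization $1/(2^{\lambda_\kk+l}\Gamma(\lambda_\kk+l+1))$, and then verify that the result agrees with the claimed coefficient $\tfrac{(-1)^l 2^{-\lambda_\kk}}{2^{2l}\,l!\,\Gamma(\lambda_\kk+l+1)}$ in \eqref{eqn:pizetti}. I expect this coefficient-matching to be the main (and essentially the sole) obstacle, since the limit itself is a finite, fully explicit computation once the reduction to a single homogeneous degree has been made; no further analytic input beyond Proposition~\ref{prop:sph1} and the Bessel asymptotics is needed.
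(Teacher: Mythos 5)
Your proposal follows the paper's own proof essentially step for step: evaluate Proposition~\ref{prop:sph1} at $y=0$ (the paper substitutes directly rather than taking a limit, which is the same thing since both sides are continuous in $y$), reduce by linearity to homogeneous $p\in\mathscr{P}_m$, use $J_\nu(r)/r^\nu\to 2^{-\nu}/\Gamma(\nu+1)$, observe that only the $j=l$ term with $m=2l$ survives because $\Delta_\kk^j p$ is homogeneous of degree $m-2j$, and sum over homogeneous components. One substantive remark on the coefficient bookkeeping you deferred: if you carry it out, the two signs cancel, since $(-i)^{2l}\cdot(-1)^l=(-1)^l\cdot(-1)^l=+1$, so the surviving term has coefficient $2^{-\lambda_\kk}/\bigl(2^{2l}\,l!\,\Gamma(\lambda_\kk+l+1)\bigr)$ \emph{without} the factor $(-1)^l$ that appears in \eqref{eqn:pizetti}. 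That factor is a sign slip in the printed statement (and in the last display of the paper's proof; the intermediate formula \eqref{eqn:pizetti01}, which still carries the explicit $(-i)^m$, is correct): for $\kk=0$ and $p(x)=||x||^2$ the left-hand side of \eqref{eqn:pizetti} is positive while the printed right-hand side is negative, and the classical Pizzetti formula indeed has all positive coefficients. So your method is the paper's method, and your anticipated ``verification'' of the claimed constant would in fact expose this typo rather than confirm it.
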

\begin{proof}\,First assume that $p\in \mathscr{P}_m$. Substituting $y=0$ to the formula 
in Proposition~\ref{prop:sph1}, we have
\begin{equation}\label{eqn:pizetti01}
b_\kk\int_{S^{d-1}}p(x)h_\kk^2(x)\,d\omega(x)=(-i)^m 
\sum_{j=0}^{[m/2]}\frac{(-1)^j}{2^j j!}\frac{2^{-\lambda_\kk-m+j}}{\Gamma(\lambda_\kk+m-j+1)}(\Delta_\kk^j\,p) (0).
\end{equation}
Notice that $(\Delta_\kk^jp) (0)=0$ for any $j\in\mathbb{Z}_+$, if $m$ is odd. 
Therefore the right hand side of \eqref{eqn:pizetti01} is zero, if $m$ is odd. 
If $m$ is even, say $m=2l$, then \eqref{eqn:pizetti01} becomes
\[
b_\kk \int_{S^{d-1}}p(x)h_\kk^2(x)\,d\omega(x)=
\frac{(-1)^l\,2^{-\lambda_\kk}}{2^{2l} l!\,\Gamma(\lambda_\kk+l+1)}(\Delta_\kk^l\,p) (0). 
\]
The formula 
\eqref{eqn:pizetti} for general $p\in\mathscr{P}$ follows by applying above results for 
each homogeneous component and summing up. 
\end{proof}

Corollary~\ref{cor:pizzetti} is a special case of \cite[Theorem 4.17]{MT}. 
If $\kappa=0$, Corollary~\ref{cor:pizzetti} is a classical formula of Pizzetti (\cite{P}, \cite[Vol II, Ch. IV, \S 3.4]{CH}）. 

If $p\in \mathscr{H}_{m,\kk}$, \eqref{eqn:pizetti} becomes the mean value property 
\[
\sigma_{d-1,\kk}^{-1}\int_{S^{d-1}}p(x)h_\kk^2(x)\,d\omega(x)=p(0). 
\]

\subsection{Hermite polynomials}
For $p\in\mathscr{P}_m$, define the Hermite polynomial $H_{p,\kk}(x)$ by 
\begin{equation}\label{eqn:hermite0}
H_{p,\kk}(x)=\left(-\frac12\right)^m e^{||x||^2}p(\D)e^{-||x||^2}. 
\end{equation}
By Hobson's formula (Theorem~\ref{thm:hobson2}) we have 
\begin{equation}\label{eqn:hermite}
H_{p,\kk}(x)=\sum_{j=0}^{[m/2]}\frac{(-1)^j}{2^{2j}j!}\Delta_\kk^j\,p(x). 
\end{equation}
In particular, if $p\in\mathscr{H}_{m,\kk}$, then $H_{p,\kk}= p$. 

The Dunkl transform of the Hermite function $h_{p,\kk}(x):=e^{-||x||^2/2} H_{p,\kk}(x)$ is given by 
\begin{equation}\label{eqn:dthermite}
\dt h_{p,\kk}=(-i)^m  h_{p,\kk}. 
\end{equation}
Gonz\'alez Vieli \cite[\S 5]{GV} proves \eqref{eqn:dthermite} by using \eqref{eqn:hermite} and Corollary~\ref{cor:hecke}. 

The above results for the Hermite polynomials 
are first given by R\"osler \cite{R0} (with different proof). If $\kk=0$, above formulae are 
proved by Strasburger \cite{S1,S2} by using Hobson's formula. 

\section*{Acknowledgement}
The author thanks Professor Takaaki Nomura and Professor Hiroshi Oda for helpful discussions. 

{\small 

}

\end{document}